\documentclass[11pt, a4paper]{amsart}
\usepackage{graphicx}
\usepackage{tikz}
\usepackage{amssymb,amsmath,amsthm}
\usepackage{verbatim}
\usepackage{color}
\usepackage[shortlabels]{enumitem}
\usepackage{dsfont}
\usepackage[margin=2.75cm]{geometry}
\usepackage{thmtools, thm-restate}
\usepackage{hyperref}
\usepackage[capitalise]{cleveref}

\declaretheorem{theorem}
\declaretheorem[sibling=theorem]{corollary, lemma, proposition, question, definition, conjecture,remark}

\begin{document}

\title{Metric graphs of negative type}
\author{Rutger Campbell}
\address{Discrete Mathematics Group, Institute for Basic Science (IBS), Daejeon, Republic of Korea}
\email{rutger@ibs.re.kr}
\author{Kevin Hendrey}
\address{School of Mathematics, Monash University, Melbourne, Australia}
\email{kevin.hendrey1@monash.edu}
\author{Ben Lund}
\address{Discrete Mathematics Group, Institute for Basic Science (IBS), Daejeon, Republic of Korea}
\email{benlund@ibs.re.kr}
\author{Casey Tompkins}
\address{HUN-REN Alfr\'ed R\'enyi Institute of mathematics}
\email{casey.tompkins@renyi.hu}

\thanks{All authors were supported by the Institute for Basic Science (IBS-R029-C1). The second author was also supported by the Australian Research Council. The fourth author was also supported by National Research, Development and Innovation Office, NKFIH, grants K135800 and K132696.}

\begin{abstract}
    The negative type inequalities of a metric space are closely tied to embeddability.
    A result by Gupta, Newman, and Rabinovich implies that if a metric graph $G$ does not contain a theta submetric as an embedding, then $G$ has negative type.
    We show the converse: if a metric graph $G$ contains a theta, then it does not have negative type.
\end{abstract}

\maketitle

\section{Introduction}

Let $(M,d)$ be a metric space.
For a function $\omega:M \rightarrow \mathbb{R}$ with finite support $X:=\mathop{supp}(\omega)$, denote
\[\gamma_d(\omega) = \sum_{x,y \in X} \omega(x)\omega(y)d(x,y).\]
A metric space has {\em negative type} if $\gamma_d(\omega)$ is negative for all such $\omega$ with $\sum_{x \in \mathop{supp}(\omega)} \omega(x) = 0$.
Negative type is a classic concept used for embeddability; recall an embedding is a map $f$ from $(M,d)$ to $(M',d')$ where $d(x,y)=d'(f(x),f(y))$ for any $x,y\in M$.
Notably, Schoenberg~\cite{schoenberg1935} showed that $(M,d)$ has negative type if and only if $(M,\sqrt{d})$ is embeddable in Euclidean space.
The following result of Deza, Laurent and Weismantel~\cite{deza1997geometry} summarizes the relationship between negative type and some closely related properties.
\begin{theorem}[{\cite[Theorem~6.3.1]{deza1997geometry}}]
Let $(X,d)$ be a finite metric space.
We label the following conditions:
\begin{enumerate}[(i)]
    \item $(X,d)$ is embeddable in some $(\mathbb{R}^n,d_2)$ where $d_2(x,y)=\left(\sum_{i=1}^n |x_i-y_i|^2\right)^{\frac{1}{2}}$.
    \item $(X,d)$ is embeddable in some $(\mathbb{R}^n,d_1)$ where $d_1(x,y)=\sum_{i=1}^n |x_i-y_i|$.
    \item $(X,d)$ has negative type.
    \item $(X,\sqrt{d})$ is embeddable in some $(\mathbb{R}^n,d_2)$ where $d_2(x,y)=\left(\sum_{i=1}^n |x_i-y_i|^2\right)^{\frac{1}{2}}$.
    \item The distance matrix of $(X,d)$ has exactly one positive eigenvalue.
\end{enumerate}
We have the following chain of implications:
$(i)\Rightarrow(ii)\Rightarrow(iii)\Leftrightarrow(iv)\Rightarrow(v)$.
\end{theorem}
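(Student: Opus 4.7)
The plan is to establish each implication in the chain separately. The main content is Schoenberg's correspondence $(iii)\Leftrightarrow(iv)$; the remaining implications reduce to classical facts about $\ell_p$-norms or to one-line calculations.

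For $(i)\Rightarrow(ii)$, I would use the Gaussian integral identity $\mathbb{E}|\langle g,x\rangle|=c\|x\|_2$ for a standard Gaussian vector $g\in\mathbb{R}^n$, giving an isometric embedding (after rescaling) of $\ell_2^n$ into $L^1$ of Gaussian measure; since $X$ is finite only $\binom{|X|}{2}$ pairwise integrals matter, so a Carath\'eodory-type argument produces a finite sample $g_1,\ldots,g_N$ with positive weights realising these integrals exactly, yielding an $\ell_1^N$-embedding. For $(ii)\Rightarrow(iii)$, I would decompose the $\ell_1$-metric as a nonnegative combination of cut semi-metrics via $|x_i-y_i|=\int_\mathbb{R}\mathds{1}[\min(x_i,y_i)<t\leq\max(x_i,y_i)]\,dt$, and then verify negative type for a single cut $\delta_S(x,y)=|\mathds{1}_S(x)-\mathds{1}_S(y)|$: letting $\alpha=\sum_{x\in X\cap S}\omega(x)$ and using $\sum_X\omega=0$, a direct count of crossing pairs gives $\gamma_{\delta_S}(\omega)=-2\alpha^2\leq 0$.

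For $(iii)\Leftrightarrow(iv)$ (Schoenberg), the direction $(iv)\Rightarrow(iii)$ is an expansion: if $d(x,y)=\|f(x)-f(y)\|_2^2$ then, using $\sum\omega=0$, $\gamma_d(\omega)=-2\|\sum_x\omega(x)f(x)\|_2^2\leq 0$. For $(iii)\Rightarrow(iv)$, fix a basepoint $x_0\in X$ and consider the pivoted Gram matrix $M_{xy}=\tfrac12(d(x,x_0)+d(y,x_0)-d(x,y))$ on $X\setminus\{x_0\}$. Testing the negative-type inequality against weights $\omega$ defined by $\omega(x)=\lambda_x$ for $x\neq x_0$ and $\omega(x_0)=-\sum_x\lambda_x$ yields, after symmetrising, $\gamma_d(\omega)=-2\lambda^TM\lambda$; so negative type is equivalent to $M\succeq 0$, and a Cholesky factorisation $M=V^TV$ produces vectors $v_x$ (with $v_{x_0}=0$) satisfying $\|v_x-v_y\|_2^2=d(x,y)$.

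For $(iii)\Rightarrow(v)$, negative type says $-D$ is positive semidefinite on the codimension-one hyperplane $H=\{\omega:\mathbf{1}^T\omega=0\}$. A dimension argument then gives: if $-D$ had two or more negative eigenvalues, the span of corresponding eigenvectors would intersect $H$ nontrivially (since $2+(n-1)>n$), contradicting PSD-ness on $H$. So $-D$ has at most one negative eigenvalue, i.e.\ $D$ has at most one positive eigenvalue. Since $D\neq 0$ has $\operatorname{tr}(D)=0$, it must have eigenvalues of both signs, hence exactly one positive. The main technical obstacle is the $(iii)\Rightarrow(iv)$ half of Schoenberg, where the algebraic reduction from conditional PSD-ness of $-D$ on the hyperplane to ordinary PSD-ness of the pivoted Gram matrix $M$ drives the proof; once this identification is in hand, the remaining implications are essentially dictated by one-line identities or the cut/Gaussian representations of $\ell_1$ and $\ell_2$.
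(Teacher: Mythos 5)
The paper does not actually prove this statement: it is quoted, with a citation, as Theorem~6.3.1 of the Deza--Laurent monograph, so there is no in-paper argument to compare yours against. Your sketch is the standard textbook proof of that theorem, and every step checks out.

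In more detail: the cut-metric computation for $(ii)\Rightarrow(iii)$ is correct ($\gamma_{\delta_S}(\omega)=2\alpha(-\alpha)=-2\alpha^2$ with the ordered-pair convention the paper uses for $\gamma_d$); the pivoted Gram matrix identity $\gamma_d(\omega)=-2\lambda^{T}M\lambda$ is exactly right, as is the verification $\lVert v_x-v_y\rVert_2^2=M_{xx}+M_{yy}-2M_{xy}=d(x,y)$ after factoring $M=V^{T}V$; and the eigenvalue count for $(iii)\Rightarrow(v)$ via the dimension argument on $H=\{\omega:\mathbf{1}^{T}\omega=0\}$ together with $\operatorname{tr}(D)=0$ is the standard route. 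Two points worth tightening. First, in $(i)\Rightarrow(ii)$ the discretisation of the Gaussian $L^1$ embedding needs one more sentence: you should justify why a \emph{finitely supported} nonnegative measure realising all $\binom{|X|}{2}$ pairwise integrals exactly (not just approximately) exists. The cleanest justification is that the cone of $\ell_1$-embeddable semimetrics on a finite set is the cut cone, which is closed (indeed polyhedral), so a metric that is a limit of finite $\ell_1$ metrics obtained by discretising the Gaussian integral is itself $\ell_1$-embeddable; your Carath\'eodory phrasing is fine but implicitly relies on this. Second, you consistently prove $\gamma_d(\omega)\le 0$, whereas the paper's definition literally demands that $\gamma_d(\omega)$ be negative; nonpositivity is the correct and intended reading, since strict negativity can fail even for $\ell_1$ metrics (the four vertices of a unit square in $\ell_1^2$ with alternating weights $\pm 1$ give $\gamma_d(\omega)=0$). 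Neither point is a gap in substance.
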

Property $(ii)$ is called $\ell_1$-embeddable.

We wish to consider metric spaces that resemble a network (of roads, for instance).
A {\em metric segment} of length $\ell$ is a metric space isometric to the real interval $[0,\ell]$.
A {\em metric graph} $G=(M,d)$ is the result of gluing a disjoint union of finitely many metric segments $E$ and points $V$ using an equivalence relation $R$ on the endpoints of elements of $E$ and elements $V$, where there is exactly one element of $V$ in each equivalence class of $R$.
We call $E$ the set of {\em edges} and $V$ the set of {\em vertices} of the metric graph and denote these $E(G)$ and $V(G)$ respectively.
The metric on $G$ is the maximal metric bounded above by the metrics on each element of $E$ and by the metric $d_R(a,b)$ that is $0$ when $aRb$ and infinity otherwise.

A \emph{theta} is any subset of points that is isometric to the metric graph consisting of two vertices and three metric segments $E_1,E_2,E_3$ that all have an endpoint in both $R$-equivalence classes.
A metric graph is \emph{theta-containing} if there is an embedding from a theta, and otherwise is \emph{theta-free}.

\newpage
\begin{theorem}[\cite{gupta2004cuts}]\label{th:weightedGraphs}
    For any graph $G$, the following are equivalent:
    \begin{enumerate}
        \item for every weighting $d$ of the edges of $G$, the shortest path metric on the vertices of $G$ induced by $d$ is $\ell_1$-embeddable,
        \item $G$ does not contain $K_{2,3}$ as a minor.
    \end{enumerate}
\end{theorem}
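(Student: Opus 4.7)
The plan is to exhibit, for the isometric theta $T \subseteq G$, a finitely supported balanced weighting $\omega$ with $\gamma_d(\omega) > 0$. Since the distances between points of $T$ are the same whether computed in $T$ or in $G$, such an $\omega$ directly certifies that $G$ is not of negative type. So it suffices to show that any theta $T$ with arc lengths $\ell_1 \leq \ell_2 \leq \ell_3$ joining two vertices $u, v$ fails to be of negative type.

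I would first try the canonical $5$-point subset $\{u, v, m_1, m_2, m_3\}$, where $m_i$ is the midpoint of arc $i$. The induced distances are $d(u,v) = \ell_1$, $d(u, m_i) = d(v, m_i) = \ell_i/2$, and $d(m_i, m_j) = (\ell_i + \ell_j)/2$. With symmetric weights $\omega(u) = \omega(v) = \alpha$ and $\omega(m_i) = \mu_i$ subject to the balance constraint $\sum_i \mu_i = -2\alpha$, a direct calculation (using the identity $\sum_{i<j}\mu_i\mu_j(\ell_i+\ell_j) = (\sum_i \mu_i)(\sum_i \mu_i\ell_i) - \sum_i \mu_i^2\ell_i$) reduces $\gamma_d(\omega)$ to the clean form $2\alpha^2 \ell_1 - \sum_i \mu_i^2 \ell_i$. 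Optimising the $\mu_i$ via Cauchy--Schwarz yields $\gamma_d > 0$ precisely when $\ell_1\sum_i 1/\ell_i > 2$, equivalently when $\ell_1/\ell_2 + \ell_1/\ell_3 > 1$; this resolves the ``balanced'' regime.

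In the complementary ``unbalanced'' regime $\ell_1/\ell_2 + \ell_1/\ell_3 \leq 1$, the midpoint subset itself has negative type (its Schoenberg matrix based at $u$ is positive semidefinite, with determinant $\ell_1^2(\ell_2\ell_3 - \ell_1(\ell_2+\ell_3)) \geq 0$), so a different construction is required. My plan is to place the three arc-points $p_i$ at positions depending on $(\ell_1,\ell_2,\ell_3)$ so that the short chord (arc $1$) appears as a nontrivial shortcut in the geodesic structure of the resulting $5$-point metric, and to allow asymmetric weights at $u$ and $v$. If no pure $5$-point choice suffices uniformly, I would enlarge to a $7$-point configuration by adding a point near each endpoint on arcs $2$ and $3$, or equivalently pass through a continuous signed measure supported on the long arcs and then discretise.

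The main obstacle is precisely this unbalanced regime. As $\ell_1 \to 0$, the theta metric degenerates to the cycle metric on arcs $2$ and $3$, which is of negative type, so any successful weighting must give a positive $\gamma_d$ whose magnitude is controlled by $\ell_1$. I expect the technical heart of the proof to be the explicit algebraic verification that the chosen weighting yields $\gamma_d > 0$ uniformly over the unbalanced regime, ultimately reducing to a single inequality in $\ell_1, \ell_2, \ell_3$ that can be checked by elementary means.
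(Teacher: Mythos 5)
You have proved (part of) the wrong statement. \cref{th:weightedGraphs} is the Gupta--Newman--Rabinovich equivalence between $K_{2,3}$-minor-freeness of $G$ and the $\ell_1$-embeddability of \emph{every} edge-weighted shortest-path metric on $G$; the paper does not prove it but imports it from \cite{gupta2004cuts}. What your proposal outlines is instead an attack on the paper's main result, \cref{th:uniformNegTypeBound}, that a theta submetric forces a failure of negative type. Even read charitably as the contrapositive of the implication $(1)\Rightarrow(2)$ --- using that $\ell_1$-embeddable implies negative type --- your argument covers only one direction of the equivalence, and it omits the reduction from ``$G$ has a $K_{2,3}$ minor'' to ``some edge-weighting of $G$ yields a shortest-path metric containing (an approximation of) a theta.'' The converse direction $(2)\Rightarrow(1)$, which is the structural heart of the cited theorem, is not addressed at all.

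Even judged as an attempt at \cref{th:uniformNegTypeBound}, the proposal has a genuine gap. Your five-point computation is correct: with $u$, $v$, the three arc midpoints, the balance constraint, and the Cauchy--Schwarz-optimal weights $\mu_i\propto 1/\ell_i$, one gets $\gamma_d>0$ exactly when $\ell_1/\ell_2+\ell_1/\ell_3>1$. But the complementary unbalanced regime is precisely where the difficulty lies, and there you offer only a plan (asymmetric weights, seven points, a continuous signed measure) with no construction and no verification; as you yourself note, the positive excess must degenerate as $\ell_1\to 0$, so the choice of points must be adapted to $\ell_1$. The paper's proof takes a different route that handles all regimes uniformly: it fixes a minimum-length theta, places three points $x_1,x_2,x_3$ spaced $1/12$ apart in a short interval of $P_1$ near a branch vertex $u$, takes their antipodal images in the cycles $P_1\cup P_2$ and $P_1\cup P_3$, and shows with $\pm 1$ weights on the two resulting triples that the negative type inequality fails by at least $1/12$; the minimality of the theta (via \cref{th:shortcuts,th:distancePreservedFromBranchPoints}) is what rules out shortcuts through the ambient graph, an issue your proposal, which works only inside an isolated theta, does not confront. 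To salvage your approach you would need to complete the unbalanced case explicitly; to prove \cref{th:weightedGraphs} as stated you would need an entirely different argument for $(2)\Rightarrow(1)$.
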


This theorem implies that all theta-free metric graphs have negative type.
Indeed, suppose that $G=(M,d)$ is a theta-free metric graph, and let $X$ be a finite subset of $M$. Then the graph with vertex set $X$ and edge lengths determined by $d$ does not contain a $K_{2,3}$-minor, and hence $(X,d)$ is $\ell_1$-embeddable, and hence has negative type.
Since this is true for every finite submetric of $G$, it follows that $G$ has negative type.

On the other hand, \cref{th:weightedGraphs} does not directly say anything 
about theta-containing metric graphs, as it does not say anything about graphs with predetermined edge weights.%
Note, for example, that complete graphs with only weight $1$ edges are $\ell_1$-embeddable: just map each vertex $x$ to its own basis vector $e_x$.
Our main result implies that theta-containing metric graphs do not have negative type.

\begin{theorem}\label{th:uniformNegTypeBound}
    If $G$ is a metric graph that contains a theta, and each segment of $E(G)$ has length at least $1$, then there is a set $B$ of three points and a set $R$ of three points such that
    \[ \sum_{x,y \in R} d(x,y) + \sum_{x,y \in B} d(x,y) - \sum_{x \in R, y \in B} d(x,y) \geq 1/12.\]
\end{theorem}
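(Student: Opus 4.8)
The plan is to work entirely inside the embedded theta, so fix an isometric copy with branch points $a,b$ and the three arcs $E_1,E_2,E_3$ of lengths $\ell_1\le\ell_2\le\ell_3$ (each at least $1$). Writing $p_i(t)$ for the point of $E_i$ at arc-distance $t$ from $a$, the only routes between chosen points run through $a$ or $b$, so every pairwise distance is given by an explicit minimum, e.g. $d(p_i(s),p_j(t))=\min\{s+t,\ \ell_i+\ell_j-s-t\}$ for $i\ne j$ and $d(a,p_i(t))=\min\{t,\ \ell_i-t+\ell_1\}$. The first observation is that the quantity in the statement is exactly $\tfrac12\gamma_d(\omega)$ for the weighting $\omega$ that is $+1$ on $R$ and $-1$ on $B$: since this $\omega$ has $\sum\omega=0$, producing $R,B$ with the displayed sum at least $1/12$ is the same as producing a zero-sum $\pm1$ weighting with $\gamma_d(\omega)\ge 1/6>0$, which is precisely a certificate that the submetric (hence $G$) is not of negative type.

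With this reformulation the task is to choose six points and check a single scalar inequality. The model to imitate is the real-weighted witness on the five points $a,b$ and the three arc-midpoints $m_i=p_i(\ell_i/2)$: there a direct computation collapses to the clean identity
\[
\gamma_d(\omega)=2\,\omega(a)\omega(b)\,\ell_1-\sum_{i=1}^{3}\ell_i\,\omega(m_i)^2 ,
\]
so $\gamma_d$ is positive exactly when one places large equal weight on the two branch points and compensating weight on the $m_i$. To turn this into a three-versus-three $\pm1$ configuration I would replace each heavy branch-point weight by a small cluster of unit points sitting on distinct arcs very close to $a$ and to $b$; letting these approach the branch points recovers the heavy-branch-point behaviour while keeping exactly three $+1$'s and three $-1$'s. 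Concretely I would take $R$ and $B$ each to consist of points near $a$ and near $b$ (or near $a,b$ together with one midpoint), with the exact positions chosen as functions of $\ell_1,\ell_2,\ell_3$.

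The remaining work is the reduction to a calculation: substitute the distance formulas, resolving each $\min$ by the obvious case (which term is attained is governed by whether a point lies before or after the ``fold'' at arc-parameter $(\ell_i+\ell_1)/2$), and simplify $\gamma_d(\omega)$ to a rational function of the $\ell_i$ and of the chosen offsets. I expect the main obstacle to be uniformity. A fixed, scale-free gadget cannot work: the straightforward midpoint/near-apex choices give a value that behaves like $\tfrac32\ell_1-\tfrac12\ell_2-\tfrac12\ell_3$, which degrades and can even fail as the arcs become unequal, while a gadget of bounded size is defeated by long arcs. The delicate point is therefore to let the point positions and cluster scales depend adaptively on the arc lengths so as to balance the positive branch-point contribution against the negative $\sum_i \ell_i\,\omega(\cdot)^2$ penalty simultaneously for every admissible triple $\ell_1\le\ell_2\le\ell_3$ with $\ell_i\ge1$; carrying out the resulting minimization and showing the worst case still exceeds the explicit constant $1/12$ is the crux of the argument.
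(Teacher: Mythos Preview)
Your reformulation is fine, and working inside an isometrically embedded theta is legitimate (the hypothesis gives you one, its branch points must land at vertices of $G$, and each arc then contains a full edge, so $\ell_i\ge 1$). The gap is in the construction itself. Your five–point ``model'' does not witness failure of negative type for all thetas: with $\omega(a)=\omega(b)=t$ and the $\omega(m_i)$ chosen optimally subject to $\sum\omega=0$, your own identity gives
\[
\max_{\omega}\ \gamma_d(\omega)\;=\;2t^2\Bigl(\ell_1-\frac{2}{1/\ell_1+1/\ell_2+1/\ell_3}\Bigr),
\]
which is $\le 0$ whenever $\ell_1/\ell_2+\ell_1/\ell_3\le 1$ (e.g.\ $\ell_1=1$, $\ell_2=\ell_3=3$). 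So the branch-points-plus-midpoints template has the wrong sign on an open region of parameters, and no $\pm1$ clustering that merely approximates it can rescue a uniform lower bound. You correctly flag that the naive choice ``can even fail as the arcs become unequal,'' but the proposal then stops exactly at the hard step: producing a configuration that works uniformly. As written this is a plan whose announced crux is left undone.

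The paper's proof uses a different geometric idea that sidesteps the dependence on the $\ell_i$. Take three equally spaced points $x_1,x_2,x_3$ inside a length-$1/6$ subinterval of $P_1$ near $u$, and let $y_j,z_j$ be the antipodes of $x_j$ in the cycles $P_1\cup P_2$ and $P_1\cup P_3$. The point is that $d(x_j,y_j)$ is exactly half the cycle length, and sliding $x_j$ by $1/12$ slides its antipode by $1/12$; this yields the exact identity
\[
d(x_j,y_j)+d(x_{j+1},y_{j+1})=d(x_j,y_{j+1})+d(x_{j+1},y_j)+\tfrac{2}{12},
\]
and the analogous identity for the $z$'s, independent of all arc lengths. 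The only remaining work is the cross distances $d(y_j,z_k)$; here the paper uses that the short intervals $J_2,J_3$ can be chosen to avoid interior vertices (this is where ``edge length $\ge 1$'' enters), forcing a shortest $y_2$–$z_2$ path to exit through one of the endpoints on each side, which gives a four-case inequality that pins down a good index $i$. The resulting $B=\{x_i,y_i,z_i\}$, $R=\{x_{i+1},y_{i+1},z_{i+1}\}$ achieve the bound $1/12$ uniformly. None of this optimizes over point positions as a function of $(\ell_1,\ell_2,\ell_3)$; the antipodal trick makes the arc lengths disappear from the relevant differences.
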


With \cref{th:weightedGraphs}, this gives a complete classification of $\ell_1$-embeddable metric graphs.
\begin{corollary}
    A metric graph is $\ell_1$-embeddable if and only if it is theta-free.
\end{corollary}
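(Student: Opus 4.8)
The plan is to prove the two implications separately. For ``$\ell_1$-embeddable $\Rightarrow$ theta-free'' I would use \cref{th:uniformNegTypeBound} together with the implication $(ii)\Rightarrow(iii)$ of the Deza--Laurent--Weismantel theorem, and for the converse I would use the discussion following \cref{th:weightedGraphs}. Two elementary observations are used throughout: $\ell_1$-embeddability and negative type are unchanged when $d$ is multiplied by a positive constant, and both are inherited by submetrics.

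First I would prove the contrapositive of the forward direction, namely that if $G$ contains a theta then $G$ is not $\ell_1$-embeddable. Since $G$ has finitely many segments, each of positive length, I may rescale $d$ so that every segment has length at least $1$; this preserves both theta-containment and $\ell_1$-embeddability, so \cref{th:uniformNegTypeBound} applies and produces triples $R$ and $B$ with $\sum_{x,y\in R}d(x,y)+\sum_{x,y\in B}d(x,y)-\sum_{x\in R,y\in B}d(x,y)\ge\tfrac{1}{12}$. Setting $\omega=\mathbf{1}_R-\mathbf{1}_B$ gives $\sum_x\omega(x)=0$, while $\gamma_d(\omega)$ is a positive multiple of the left-hand side above and is therefore strictly positive. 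Thus the finite submetric on $R\cup B$ fails to have negative type, so by $(ii)\Rightarrow(iii)$ it is not $\ell_1$-embeddable; since $\ell_1$-embeddability passes to submetrics, neither is $G$.

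For the converse I would observe that the paragraph after \cref{th:weightedGraphs} already does the essential work: when $G$ is theta-free, every finite submetric $(X,d)$ of $G$ is $\ell_1$-embeddable. If one reads ``$\ell_1$-embeddable metric graph'' as ``every finite submetric is $\ell_1$-embeddable'', this finishes the argument. To produce a single isometric embedding of the whole (separable) space $G$ into some $L_1(\mu)$, I would additionally invoke the standard compactness principle that a metric space embeds isometrically into $L_1$ exactly when each of its finite submetrics embeds into $\ell_1$; one proof takes a Banach-space ultraproduct of the finite embeddings and uses that ultraproducts of $L_1$-spaces are again $L_1$-spaces.

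I expect this finite-to-infinite passage to be the only genuine obstacle, the forward direction being routine once \cref{th:uniformNegTypeBound} is available. A concrete alternative that avoids the abstract compactness step is to build the embedding by hand: a theta-free metric graph is a metric cactus, in which every block is a segment or a cycle; a segment embeds isometrically into $\mathbb{R}$, a cycle of length $L$ embeds isometrically into $L_1$ via $x\mapsto\tfrac{1}{2}\mathbf{1}_{(x,\,x+L/2)}$, and because the block--cut tree is a tree these block embeddings can be amalgamated through $\ell_1$-direct sums so that distances add across cut vertices. The delicate point in this alternative is to verify that metric theta-freeness forces this cactus block structure, equivalently that a topological theta inside a block always yields an isometric theta submetric.
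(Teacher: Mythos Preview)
Your argument is correct and follows the paper's intended route: the paper gives no explicit proof of this corollary, simply asserting it follows from \cref{th:weightedGraphs} together with \cref{th:uniformNegTypeBound}, and your two directions carry out precisely that combination. Your rescaling step to meet the edge-length hypothesis of \cref{th:uniformNegTypeBound}, and your discussion of the finite-to-infinite passage for $\ell_1$-embeddability, make explicit details the paper leaves to the reader.
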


\subsection{Negative type gap for theta-containing metric graphs}
Let $(M,d)$ be a metric space, and denote
\[\Gamma(M,d) = \sup_{\omega} \gamma_d(\omega),\]
where $\omega$ ranges over functions with finite support with $\sum_{x \in M} \omega(x) = 0$ and $\sum_{x \in M} |\omega(x)| = 1$.
$\Gamma(M,d)$ is the {\em negative type gap} of $(M,d)$.
The notion of negative type gap was introduced in~\cite{doust2008enhanced}.

For a family $\mathcal{M}$ of metric spaces, denote
\[
\Gamma_{\inf} (\mathcal{M}) = \inf_{(M,d) \in \mathcal{M}} \Gamma(M,d).\]
Note that, for any metric space $(X,d)$ and $t>0$, we have $\Gamma((X,d)) = t^{-1}\Gamma((X,td))$.
Consequently, it is only interesting to consider $\Gamma_{\inf}(\mathcal{M})$ if we fix a scale in the definition of $\mathcal{M}$.

\cref{th:uniformNegTypeBound} immediately implies the following lower bound on $\Gamma_{\inf}$ for an interesting family of metric graphs.

\begin{corollary}
    If $\mathcal{G}$ is the family of theta-containing metric graphs having all edge lengths at least $1$,
    then
    \[\Gamma_{\inf}(\mathcal{G}) \geq 1/432. \]
\end{corollary}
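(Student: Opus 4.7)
My approach is to deduce the corollary directly from \cref{th:uniformNegTypeBound} by exhibiting an explicit admissible test weighting $\omega$ whose quadratic form $\gamma_d(\omega)$ realises, up to a multiplicative constant, the left-hand side of the theorem's inequality.

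Fix $G = (M,d) \in \mathcal{G}$. By \cref{th:uniformNegTypeBound}, there exist three-point sets $R, B \subseteq M$ satisfying
\[ \sum_{x,y \in R} d(x,y) + \sum_{x,y \in B} d(x,y) - \sum_{x \in R, y \in B} d(x,y) \geq \tfrac{1}{12}. \]
I would then define $\omega : M \to \mathbb{R}$ by $\omega(x) = 1/6$ for $x \in R$, $\omega(x) = -1/6$ for $x \in B$, and $\omega(x) = 0$ otherwise (any point lying in $R \cap B$ contributes $0$, which is harmless). This gives $\sum_x \omega(x) = 0$ since $|R| = |B|$, and $\sum_x |\omega(x)| \leq 1$, with equality when $R$ and $B$ are disjoint.

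Next I would expand $\gamma_d(\omega) = \sum_{x,y} \omega(x)\omega(y)\, d(x,y)$ by splitting the support into the four blocks $R \times R$, $B \times B$, $R \times B$, $B \times R$. Each product $\omega(x)\omega(y)$ is $\pm 1/36$ according to whether $x$ and $y$ lie in the same block, and the symmetry of $d$ identifies the two cross-block contributions. Collecting terms (with appropriate bookkeeping of ordered-pair conventions) shows that $\gamma_d(\omega)$ is $1/36$ times the left-hand side of the displayed inequality, giving $\gamma_d(\omega) \geq 1/(36 \cdot 12) = 1/432$. If $R \cap B \neq \emptyset$ and hence $\|\omega\|_1 < 1$, renormalising to $\omega/\|\omega\|_1$ only enlarges $\gamma_d$ (it is degree-$2$ homogeneous and already nonnegative), so the bound is preserved. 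We conclude $\Gamma(M,d) \geq 1/432$ for every $G \in \mathcal{G}$, and taking the infimum gives $\Gamma_{\inf}(\mathcal{G}) \geq 1/432$. The only delicate point---and the main obstacle, such as it is---is tracking the ordered-versus-unordered pair conventions so that the constant $1/432$ comes out correctly; otherwise the deduction is entirely routine, as the authors signal by calling it immediate.
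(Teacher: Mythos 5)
Your proof is correct and is exactly the paper's argument: the authors likewise take $\omega = 1/6$ on $R$ and $-1/6$ on $B$ and observe that $\gamma_d(\omega)$ is $1/36$ times the quantity bounded in \cref{th:uniformNegTypeBound}. Your extra care about $R \cap B \neq \emptyset$ is moot, since the six points produced in the proof of \cref{th:uniformNegTypeBound} are distinct.
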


\begin{proof}
    This is witnessed by the six points given by \cref{th:uniformNegTypeBound}, with $\omega(x) = -(1/6)$ for $x \in B$ and $\omega(y) = 1/6$ for $y \in R$.
\end{proof}

Note that in a theta $(M,d)$ consisting of three metric segments of length 1, the distance between any two points is at most 1.
Thus for $\omega$ with $\sum_{x\in M} |\omega(x)|=1$, we have $\gamma_d(\omega)=\sum_{x\in M} \omega(x)\omega(y)d(x,y)\leq 1$.
This gives us that $\Gamma_{\inf}(\mathcal{G})\leq 1$ for any family containing such a theta.

We leave the following question open.
\begin{question}
    What is $\Gamma_{\inf}(\mathcal{G})$ for the family of theta-containing metric graphs having all edge lengths at least $1$.
\end{question}

\subsection{Subdivisions of graphs}

\Cref{th:uniformNegTypeBound} also has the following consequence for the shortest path metric on subdivisions of graphs that contain $K_{2,3}$ as a minor.

\begin{corollary}
    Let $G$ be a graph that contains $K_{2,3}$ as a minor.
    Then, the shortest path metric of the $180$-subdivision of $G$ does not have negative type.
\end{corollary}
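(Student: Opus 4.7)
The strategy is to apply \cref{th:uniformNegTypeBound} to the metric graph $H$ arising from the $180$-subdivision of $G$ (with each subdivision edge regarded as a unit-length segment) and to verify that the resulting witness can be arranged to live on $V(H)$, so that the inequality is witnessed by the shortest-path metric.

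Since $K_{2,3}$ has maximum degree $3$, the assumption that $G$ contains $K_{2,3}$ as a minor implies the existence of a topological $K_{2,3}$ in $G$: two vertices $u,v\in V(G)$ joined by three internally disjoint paths $P_1,P_2,P_3$. I would pick such a configuration minimizing $|P_1|+|P_2|+|P_3|$ among all such triples in $G$. In $H$ the corresponding subdivided paths $P_1',P_2',P_3'$ have length at least $360$ each. I claim that $P_1'\cup P_2'\cup P_3'$ is an isometric theta in $H$. Otherwise, some pair of points on this union would have $H$-distance strictly smaller than its distance within $P_1'\cup P_2'\cup P_3'$; since every subdivision vertex in the interior of an edge of $G$ has degree $2$ in $H$, such a shortcut must enter and exit $P_1'\cup P_2'\cup P_3'$ at original vertices of $G$ and thus corresponds in $G$ to a path $R$ using edges disjoint from $P_1\cup P_2\cup P_3$. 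A case analysis, depending on whether the shortcut joins two points on the same $P_i$ or on two distinct paths $P_i,P_j$, furnishes in each case a triple of internally disjoint paths between some (possibly new) pair of vertices of $G$ with strictly smaller total length, contradicting minimality.

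With the isometric theta in hand, \cref{th:uniformNegTypeBound} (applied to $H$, each of whose segments has length $1 \geq 1$) provides sets $R$ and $B$ of three points each with $\sum_{x,y\in R}d(x,y)+\sum_{x,y\in B}d(x,y)-\sum_{x\in R,y\in B}d(x,y)\geq 1/12$. Setting $\omega(x)=1/6$ on $R$ and $\omega(x)=-1/6$ on $B$ gives $\gamma_d(\omega)\geq 1/432>0$, exactly as in the proof of the previous corollary. The factor $180$ is chosen large enough that the specific six points produced by the proof of \cref{th:uniformNegTypeBound} can all be taken at vertices of $H$: the construction places them at rational fractions of segment lengths with bounded denominators dividing $180$, while the segments of the isometric theta in $H$ have length divisible by $180$. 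Hence $\omega$ is supported on $V(H)$, and the shortest-path metric on $V(H)$ itself fails to have negative type.

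The main obstacle is the case analysis establishing the isometric theta, especially when the shortcut connects interior points $x'\in P_i$ and $y'\in P_j$ with $i\neq j$: one must identify a new pair of endpoints (for example $u$ and $y'$, or $v$ and $x'$) and exhibit three internally disjoint paths between them, using $R$, whose total length is strictly less than $|P_1|+|P_2|+|P_3|$. Additional care is needed if $R$ passes through interior vertices of $P_k$ for $k\notin\{i,j\}$, in which case one shortens those paths first and iterates the argument.
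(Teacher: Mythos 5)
There is a genuine gap, and it sits exactly at the step your proof needs most: moving the six witness points onto actual vertices of the subdivision. You normalize each subdivision edge to a unit segment and then claim that the six points produced by the proof of \cref{th:uniformNegTypeBound} lie ``at rational fractions of segment lengths with bounded denominators dividing $180$.'' That is not what the construction does: it places $x_1,x_2,x_3$ at spacing $1/12$ inside an interval of length $1/2$ starting at the branch vertex $u$ (with the starting offset chosen by an averaging argument, not by any divisibility consideration), and the other six coordinates are images under $\phi_2,\phi_3$. With your unit normalization these are interior points of a single subdivision edge, never vertices, so the weighting $\omega$ you build is not supported on $V(G')$ and says nothing about the shortest-path metric of $G'$. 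The paper closes this gap differently: it takes the segments of the metric graph to be the whole subdivided paths (each of length at least $181$), so that rescaling \cref{th:uniformNegTypeBound} yields a gap of at least $181/12>15$; it then rounds each of the six points to a nearest vertex of $G'$ (distance at most $1/2$), which perturbs each of the at most $15$ relevant pairwise distances by at most $1$ and hence keeps the expression strictly positive. Some such quantitative slack-and-round argument is unavoidable; a divisibility argument cannot be made to work with the construction as given.

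A secondary problem is your claim that a total-length-minimal topological theta is isometric in $H$. This is false in general: minimality only yields \cref{th:shortcuts}, i.e.\ a shortcut $S$ between $x\in P_i$ and $y\in P_j$ satisfies $|S|\ge\max\bigl(d_T(u,x),d_T(u,y),d_T(v,x),d_T(v,y)\bigr)$, which can be about half of $d_T(x,y)$ (think of $K_4$ with unit edges, where the edge between the two internal branch vertices shortcuts the minimal theta). Your proposed case analysis would therefore not produce a shorter theta from such a shortcut. Fortunately this part is not needed: \cref{th:uniformNegTypeBound} (as its proof shows, via \cref{th:shortcuts,th:distancePreservedFromBranchPoints}) only requires a topological theta, which the $K_{2,3}$-minor supplies directly.
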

\begin{proof}
    Let $G'$ be the $180$-subdivision of $G$.
    Let $M$ be the metric graph obtained by taking $V$ to be the vertices of degree at least $3$ in $G'$, and segments $E$ for induced paths in $G'$, with length equal to the corresponding path length in $G'$.
    Since $G$ has a $K_{2,3}$-minor, $M$ contains a theta.
    Since each segment of $M$ has length at least $181$, \cref{th:uniformNegTypeBound} implies that there are sets $R$ and $B$ of three points each such that
    \[\sum_{x,y \in R} d(x,y) + \sum_{x,y \in B} d(x,y) - \sum_{x \in R, y \in B} d(x,y) \geq \frac{181}{12}  > 15. \]
    Each of these points must be at distance at most $1/2$ from the location of an actual vertex of $G'$.
    Hence, we have sets $R',B' \subset V(G')$ such that $d(x',y') + 1 \geq d(x,y) \geq d(x',y') - 1$ for any  $x,y \in R \cup B$ and $x',y' \in R' \cup B'$.
    Hence,
    \[\sum_{x,y \in R'} d(x,y) + \sum_{x,y \in B'} d(x,y) - \sum_{x \in R', y \in B'} d(x,y) > 0,
    \]
    as claimed.
\end{proof}
A better bound than $180$ can be recovered directly from the proof of \cref{th:uniformNegTypeBound}, but such a result would most likely be far from best possible.

\begin{proposition}
    The shortest path metric of the $2$-subdivision of $K_4$ is $\ell_1$-embeddable.
\end{proposition}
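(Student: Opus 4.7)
The plan is to construct an explicit $\ell_1$-isometric embedding of the shortest path metric $d$ into $\mathbb{R}^6$. Label $V(K_4) = \{1,2,3,4\}$, and for each $K_4$-edge $\{i,j\}$ let $u_{ij}^i$ denote the subdivision vertex of $G'$ adjacent to $i$. For each of the six $2$-subsets $\{i,j\} \subseteq \{1,2,3,4\}$, define a coordinate $f_{\{i,j\}}: V(G') \to \{0, 1/2, 1\}$ that records the sign of $d(v,j) - d(v,i)$: set $f_{\{i,j\}}(v) = 0$ if $v$ is strictly closer to $i$, $f_{\{i,j\}}(v) = 1$ if strictly closer to $j$, and $f_{\{i,j\}}(v) = 1/2$ if $d(v,i) = d(v,j)$. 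Put $f(v) := (f_{\{i,j\}}(v))_{\{i,j\}} \in \mathbb{R}^6$; equivalently, this expresses $d$ as a non-negative combination of $12$ cut pseudometrics (for each $\{i,j\}$, the two level-set cuts $\{f_{\{i,j\}} \ge 1/2\}$ and $\{f_{\{i,j\}} = 1\}$, each with weight $1/2$). The target claim is then $\|f(x)-f(y)\|_1 = d(x,y)$ for all $x,y \in V(G')$.

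The symmetric group $S_4$ acts on $V(G')$ by permuting the labels $\{1,2,3,4\}$ and preserves $d$; the induced action on $\mathbb{R}^6$ permutes the six coordinates (applying the involution $t \mapsto 1 - t$ in those coordinates whose labels get reordered), which preserves pairwise $\ell_1$-differences. Both sides of the desired identity are therefore $S_4$-invariant functions of $(x,y)$, so it suffices to verify the equality on one representative from each $S_4$-orbit of unordered vertex pairs. Classifying pairs by whether each endpoint is an original $K_4$-vertex $(O)$ or a subdivision vertex $(S)$, and further by the relative configuration of the incident $K_4$-edges, yields exactly nine orbits: a single $(O,O)$-orbit at distance $3$; three $(O,S)$-orbits at distances $1$, $2$, and $4$; and five $(S,S)$-orbits at distances $1$, $2$, $3$, $4$, and $5$.

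What remains is to check the nine identities by direct computation. For each representative pair $(x,y)$, one reads off the six values $f_{\{i,j\}}(x)$ and $f_{\{i,j\}}(y)$ from the pairwise-distance table of $G'$ and sums $|f_{\{i,j\}}(x) - f_{\{i,j\}}(y)|$ over the six pairs. The main obstacle is the bookkeeping of ties (vertices where $f_{\{i,j\}} = 1/2$), whose $1/2$-contributions are essential for the totals to match. For example, at the distance-$5$ representative $(u_{12}^1, u_{34}^3)$, the coordinate differences across $\{1,2\}, \{1,3\}, \{1,4\}, \{2,3\}, \{2,4\}, \{3,4\}$ are $1/2, 1, 1, 1, 1, 1/2$, which sum to $5$; the other eight orbit representatives are similarly routine.
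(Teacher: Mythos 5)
Your proof is correct and is essentially the paper's proof in disguise: your level sets $\{v : d(v,i) < d(v,j)\}$ are exactly the paper's cut sets $S_{i,j}$, so your twelve half-weighted cuts coincide with the paper's decomposition $2d=\sum_{i,j} d_{S_{i,j}}$. The only difference is presentational — you package the cuts as an explicit map into $\mathbb{R}^6$ and carry out the verification (reduced to nine $S_4$-orbits) that the paper leaves to the reader.
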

\begin{proof}
For a vertex set $S$ in a graph $G$, the cut metric $d_S$ is defined by $d_S(x,y)=1$ if exactly one of $x$ and $y$ is in $S$ and $d_S(x,y)=0$ otherwise.  
It is known~\cite{deza1997geometry} that a graph $G$ is $\ell_1$-embeddable if and only if the shortest path metric $d$ is equal to a linear combination of cut metrics on $G$ with positive coefficients. 
Let $x_1, x_2,x_3$ and $x_4$ be the degree-$3$ vertices of the $2$-subdivision of $K_4$. 
For $i,j\in [4]$, let $S_{i,j}$ be the set containing $x_i$ and all vertices within distance $2$ of $x_i$ according to $d$ which are not adjacent to $x_j$.
It is simple to verify that $2d$ is equal to the sum of all $d_{S_{i,j}}$.
\end{proof}

In light of the above theorems, we propose the following conjecture.
\begin{conjecture}
    There is no graph $G$ with a $K_{2,3}$ minor where the shortest distance metric of the $3$-subdivision of $G$ has negative type.
\end{conjecture}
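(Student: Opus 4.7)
The plan is to exhibit, for each graph $G$ with a $K_{2,3}$-minor, a weighting $\omega$ on the vertex set of the $3$-subdivision $G'$ with $\sum_x \omega(x) = 0$ and $\gamma_d(\omega) > 0$, thereby showing directly that $G'$ does not have negative type.

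First I would settle the base case $G = K_{2,3}$ by a direct computation. Let $U, V$ be the two degree-$3$ vertices and $W_1, W_2, W_3$ the degree-$2$ vertices of $K_{2,3}$; all are vertices of $G'$, with distances $d(U,V) = 8$, $d(U,W_i) = d(V,W_i) = 4$, and $d(W_i,W_j) = 8$ for $i \neq j$. Setting $\omega(U) = \omega(V) = -1$ and $\omega(W_i) = 2/3$, one computes $\gamma_d(\omega) = 16/3 > 0$, settling this case.

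For a general $G$, since $K_{2,3}$ has maximum degree $3$ the $K_{2,3}$-minor can be realized as a topological $K_{2,3}$ in $G$, with branch vertices $U, V, W_1, W_2, W_3$ joined by internally disjoint paths of lengths $a_i$ and $b_i$ in $G$ (for the edges $UW_i$ and $W_iV$ of $K_{2,3}$), becoming paths of length $4a_i$ and $4b_i$ in $G'$. The analogous weighting $\omega(U) = \omega(V) = -1$, $\omega(W_i) = 2/3$ can be checked, under the favorable assumption that these paths are geodesic in $G$, to yield
\[\gamma_d(\omega) = 8\min_i s_i - \tfrac{16}{3}\sum_i s_i + \tfrac{32}{9}\sum_{i<j}\min(a_i+a_j,\, b_i+b_j),\]
where $s_i = a_i + b_i$; this is positive whenever the $s_i$ are sufficiently balanced (roughly, $\sum_i s_i < \tfrac{9}{2}\min_i s_i$) but may fail for a highly lopsided choice. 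To cover the lopsided case, I would introduce compensating signed weights at carefully chosen internal vertices along the overly long subdivided paths---such vertices being freely available thanks to the $3$-subdivision providing three intermediates per edge of $G$---selected so that their cross-contributions cancel the excess negative terms while preserving the positive $d(U,V)$-contribution.

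The main obstacle I foresee is the effect of shortcut paths in $G$ lying outside the chosen topological $K_{2,3}$: the actual distances in $G'$ may be strictly smaller than the values predicted by the branch-vertex paths, and shortcuts tend to shrink the positive contribution in $\gamma_d(\omega)$. I would attack this either by choosing the topological $K_{2,3}$ so that its paths are geodesic in $G$ (aligning the analysis with the clean computation above), or by proving a monotonicity principle showing that shortcuts cannot destroy the negative-type violation. As a fallback, a sharpening of \cref{th:uniformNegTypeBound} whose quantitative conclusion exceeds $15$ whenever segments have length at least $8$ would, via the argument used in the $180$-subdivision corollary, reduce the conjecture to a nontrivial but self-contained quantitative refinement of the main theorem.
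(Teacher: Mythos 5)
This statement is left as an open conjecture in the paper --- there is no proof of it there to compare against --- so your proposal must stand on its own, and as written it does not: it is a plan with two explicitly unresolved gaps, each of which is precisely where the difficulty of the problem lives. Your base-case computation for $K_{2,3}$ is correct ($\gamma_d(\omega)=16/3$ with the ordered-pair convention), and the reduction from a $K_{2,3}$-minor to a topological $K_{2,3}$ is fine since $K_{2,3}$ has maximum degree $3$. But your own formula shows that the weighting $\omega(U)=\omega(V)=-1$, $\omega(W_i)=2/3$ fails badly in the lopsided case: as one $s_i\to\infty$ the term $-\tfrac{16}{3}\sum_i s_i$ dominates and $\gamma_d(\omega)\to-\infty$. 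The fix you propose --- ``compensating signed weights at carefully chosen internal vertices'' --- is exactly the hard part, because a $3$-subdivision offers only three interior vertices per edge of $G$, i.e.\ weight can only be placed at quarter-points of each subdivided edge. The proof of \cref{th:uniformNegTypeBound} relies essentially on a continuous averaging argument (choosing subintervals $J_1,J_2,J_3$ of length $1/6$ free of interior vertices), and it is this discreteness obstruction that forced the authors to a $180$-subdivision in their corollary and to leave the $3$-subdivision case as a conjecture. No construction of the compensating weights is given, and none is obviously available.

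The second gap is the shortcut problem, which you name but do not resolve. A geodesic choice of the topological $K_{2,3}$ need not exist in a form that controls all fifteen pairwise distances simultaneously, and the hoped-for ``monotonicity principle'' is suspect: a short $UV$-path outside the configuration shrinks the unique large positive term $\omega(U)\omega(V)d(U,V)$ while leaving the negative terms intact, so shortcuts can destroy rather than preserve the violation. (The paper's \cref{th:shortcuts} controls shortcuts only for a theta of \emph{minimum total length}, a setup your five-branch-vertex configuration does not inherit.) Finally, the fallback is miscalibrated: in the metric graph of a $3$-subdivision the segments have length at least $4$ (not $8$), so \cref{th:uniformNegTypeBound} yields only $4/12=1/3$, against a vertex-rounding loss of up to $15$; closing that gap would require improving the constant $1/12$ by roughly a factor of $45$, which is a substantial open quantitative question in its own right, not a routine refinement.
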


\section{Proof of main theorem}

Let $T$ be a theta with minimum total length in $G$, let $u,v$ be the two vertices of degree three in $T$, and let $P_1,P_2,P_3$ be the three internally disjoint $uv$ paths of $T$, with $|P_1| \leq |P_2| \leq |P_3|$.
For $x,y \in T$, let $d_T(x,y)$ be the distance between $x$ and $y$ in $T$.

\begin{lemma}\label{th:shortcuts}
    Let $S$ be an $xy$-path in $G$, internally disjoint from $T$, where $x \in P_i$ and $y \in P_j$ with $i<j$.
    Then, 
    \[|S| \geq \max(d_T(u,x),d_T(u,y),d_T(v,x),d_T(v,y)).\]
\end{lemma}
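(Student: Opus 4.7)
The plan is to argue by contradiction, leveraging the minimality of $|T|$: whenever one of the four distances strictly exceeds $|S|$, I would splice $S$ into $T$ to produce a theta $T'$ of strictly smaller total length. The four inequalities play symmetric roles under the swaps $u \leftrightarrow v$ and $x \leftrightarrow y$ (with the indices of the $P_\ell$ relabelled accordingly), so I would write out a single case in detail, say the bound $|S| \geq d_T(u,x)$, and recover the others from the same template.

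Suppose for contradiction that $|S| < d_T(u,x)$. Let $k$ be the unique element of $\{1,2,3\}\setminus\{i,j\}$, split $P_i$ at $x$ into the $ux$-subpath $P_i^u$ (of length $d_T(u,x)$) and the $xv$-subpath $P_i^v$ (of length $d_T(v,x)$), and split $P_j$ at $y$ into $P_j^u,P_j^v$ analogously. Take $y$ and $v$ as the branch vertices of a new theta $T'$, with the three internally disjoint $yv$-paths
\[\alpha := P_j^v, \qquad \beta := P_j^u \cup P_k, \qquad \gamma := S \cup P_i^v.\]
Internal disjointness of $\alpha,\beta,\gamma$ follows directly from $P_1,P_2,P_3$ being pairwise internally disjoint in $T$ and from $S$ being internally disjoint from $T$. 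Adding up the lengths,
\[|T'| = d_T(v,y) + \bigl(d_T(u,y) + |P_k|\bigr) + \bigl(|S| + d_T(v,x)\bigr) = |P_j| + |P_k| + |S| + d_T(v,x),\]
so $|T'| - |T| = |S| - d_T(u,x) < 0$, contradicting the minimality of $T$.

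The remaining three inequalities are proved by the same construction after relabelling: to bound $d_T(v,x)$, $d_T(u,y)$, $d_T(v,y)$ one takes branch pairs $\{y,u\}$, $\{x,v\}$, $\{x,u\}$ respectively, always letting $\gamma$ be $S$ concatenated with the subpath of $P_i$ or $P_j$ lying on the side of the distance being bounded (so that the cancellation in the computation of $|T'|-|T|$ leaves exactly the wanted $|S|$ minus the wanted distance). The main obstacle is purely bookkeeping: selecting the right branch pair and the right subpaths for each of the four cases, and handling the degenerate sub-case where $x$ or $y$ happens to coincide with $u$ or $v$. In those situations some of the subpaths are trivial but the same construction still produces a legitimate shorter theta, and any of the four distances to $u$ or $v$ that happen to be $0$ give vacuously satisfied inequalities.
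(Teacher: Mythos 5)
Your argument is correct and takes essentially the same route as the paper: both delete the $ux$-subpath of $P_i$ and splice in $S$ to obtain a new theta whose total length differs from $|T|$ by $|S|-|P_i^u|$, so minimality of $T$ forces $|S|\ge |P_i^u|$, and the other three bounds follow by the same symmetric relabelling. One small imprecision: the subpath $P_i^u$ has length at least $d_T(u,x)$ rather than exactly $d_T(u,x)$ (the shortest $ux$-route in $T$ may go around through $v$), but this only strengthens your conclusion, since $|S|\ge |P_i^u|\ge d_T(u,x)$.
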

\begin{proof}
    Let $P \subset P_i$ be a $xu$-path.
    Then $T \setminus P \cup S$ is a theta - see \cref{fig:thetaShortcut}.
    Since $T$ has minimum total length, we have
    \[|S| \geq |P| \geq d_T(x,u). \]
    The same reasoning applies to the other named distances.
\end{proof}

\begin{figure}[h!]
\begin{center}
\begin{tikzpicture}[scale=0.6]
    \draw[black, thick] (0,0)--(4,0);
    \draw[black, thick] (0,0) arc [x radius = 2, y radius =3, start angle = 180, end angle =0];
    \draw[black, thick] (0,0) arc [x radius = 2, y radius =1.5, start angle = -180, end angle =0];
    \draw[blue!50, thick] (1,0)--(2,3);

    \draw[black, thick] (5,0)--(9,0);
    \draw[black, thick] (5,0) arc [x radius = 2, y radius =1.5, start angle = -180, end angle =0];
    \draw[black, thick] (9,0) arc [x radius = 2, y radius =3, start angle = 0, end angle =90];
    \draw[blue!50, thick] (6,0)--(7,3);

    \draw[black, thick] (11,0)--(14,0);
    \draw[black, thick] (10,0) arc [x radius = 2, y radius =3, start angle = 180, end angle =0];
    \draw[black, thick] (10,0) arc [x radius = 2, y radius =1.5, start angle = -180, end angle =0];
    \draw[blue!50, thick] (11,0)--(12,3);

    \draw[black, thick] (15,0)--(16,0);
    \draw[black, thick] (15,0) arc [x radius = 2, y radius =3, start angle = 180, end angle =0];
    \draw[black, thick] (15,0) arc [x radius = 2, y radius =1.5, start angle = -180, end angle =0];
    \draw[blue!50, thick] (16,0)--(17,3);

    \draw[black, thick] (20,0)--(24,0);
    \draw[black, thick] (20,0) arc [x radius = 2, y radius =3, start angle = 180, end angle =90];
    \draw[black, thick] (20,0) arc [x radius = 2, y radius =1.5, start angle = -180, end angle =0];
    \draw[blue!50, thick] (21,0)--(22,3);
\end{tikzpicture}
\end{center}\caption{A black theta with a blue shortcut, followed by four alternate thetas formed by replacing a path in the theta by the shortcut. If the black theta is minimal, then the blue shortcut is at least as long as each path removed to obtain the alternate thetas.}
\label{fig:thetaShortcut}
\end{figure}
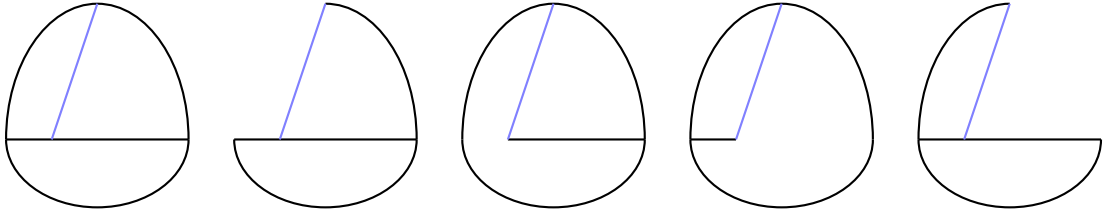

\begin{lemma}\label{th:distancePreservedFromBranchPoints}
    If $x,y \in T$ and $d(x,u) \leq 1/2$, then $d(x,y) = d_T(x,y)$.
\end{lemma}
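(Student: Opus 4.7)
The plan is to prove $d(x,y)\geq d_T(x,y)$; the reverse inequality holds trivially since $T\subseteq G$. Take a shortest $xy$-path $Q$ in $G$ and decompose it as $Q=T_0\cdot S_1\cdot T_1\cdots S_k\cdot T_k$, where each $T_i$ is a maximal sub-path contained in $T$ and each $S_j$ is a maximal sub-path internally disjoint from $T$, with endpoints $p_j,q_j\in T$. By the same minimality argument as in \cref{th:shortcuts}, any shortcut $S_j$ whose endpoints lie on a common $P_i$, or with at least one endpoint equal to $u$ or $v$, has length at least the length of a corresponding $T$-segment; replacing $S_j$ by that $T$-segment does not increase $|Q|$. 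After iterating, I may assume every remaining $S_j$ has endpoints $p_j,q_j$ interior to two distinct paths among $P_1,P_2,P_3$. Then \cref{th:shortcuts} gives $|S_j|\geq\max\{d_T(u,p_j),d_T(v,p_j),d_T(u,q_j),d_T(v,q_j)\}$, and since $p_j,q_j$ are vertices of $G$ different from $u$ and $v$ while every edge of $G$ has length $\geq 1$, each of these four $T$-distances is at least $1$, so $|S_j|\geq 1$.

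Applying this reduction to the case $y=u$, any shortest $xu$-path has length at most $1/2<1$, so no shortcut of the above form can survive, meaning that after reduction the path lies in $T$. This yields $d_T(x,u)=d(x,u)\leq 1/2$ and hence the key inequality: for every vertex $p$ of $G$ in $T$ with $p\neq u$,
\[d_T(x,p)\;\geq\;d_T(u,p)-d_T(u,x)\;\geq\;1-\tfrac{1}{2}\;=\;\tfrac{1}{2}\;\geq\;d_T(x,u),\]
by the reverse triangle inequality in $T$.

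For the main bound $|Q|\geq d_T(x,y)$ I would then prove by induction on $i$ that $|T_0|+|S_1|+\cdots+|S_i|\geq d_T(x,q_i)$. For the base case $i=1$, $|T_0|\geq d_T(x,p_1)$ (as $T_0\subseteq T$) and $|S_1|\geq d_T(u,q_1)$ (from \cref{th:shortcuts}), so
\[|T_0|+|S_1|\;\geq\;d_T(x,p_1)+d_T(u,q_1)\;\geq\;d_T(x,u)+d_T(u,q_1)\;\geq\;d_T(x,q_1),\]
using the key inequality at $p_1$ and then triangle inequality in $T$. The inductive step is identical in spirit, adjoining $|T_i|\geq d_T(q_i,p_{i+1})$ and $|S_{i+1}|\geq d_T(u,q_{i+1})$ and applying the triangle inequality together with the key inequality at $p_{i+1}$. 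Finally, adding $|T_k|\geq d_T(q_k,y)$ gives $|Q|\geq d_T(x,q_k)+d_T(q_k,y)\geq d_T(x,y)$. The main obstacle I anticipate is the careful casework needed to justify the reduction to the ``useful'' shortcuts; once the four non-useful configurations are all handled by suitable alternate-theta constructions, the $1/2$ hypothesis combined with the $\geq 1$ edge-length assumption makes the key inequality hold, and the induction then telescopes cleanly.
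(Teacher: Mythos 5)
Your argument is correct and uses the same essential ingredients as the paper's proof: \cref{th:shortcuts} applied to the portions of a shortest $xy$-path that leave $T$, the observation that such a portion can only depart from $T$ at a vertex of $G$ (hence, since every edge has length at least $1$, at $T$-distance at least $1/2\ge d_T(x,u)$ from $x$), and the minimality of $T$ for shortcuts whose endpoints lie on a common $P_i$. The paper organizes this more compactly by truncating $Q$ at the first point $y$ of $Q\cap T$ with $d(x,y)<d_T(x,y)$, which reduces the whole analysis to a single shortcut and makes your telescoping induction unnecessary; your extra step verifying $d_T(x,u)=d(x,u)\le 1/2$ before invoking it is a detail the paper's version leaves implicit.
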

\begin{proof}
    Suppose that $d(x,y) < d_T(x,y)$.
    Let $Q$ be an $xy$-path with $|Q| = d(x,y)$.
    Suppose $y$ is the only point of $Q \cap T$ such that $d(x,y) < d_T(x,y)$; if not, rename $y$ and $Q$ so that this holds.
    Then, there is a point $z \in T$ such that $Q$ consists of an $xz$-path of length $d_T(x,z)$, followed by a $zy$-path $S$ internally disjoint from $T$ such that $|S| < d_T(z,y)$.
    
    If $z,y \in P_i$ for $i \in [3]$, then the minimality of $T$ implies that $|S| \geq d_T(z,y)$, a contradiction.
    Otherwise, using \cref{th:shortcuts} and the assumption that $d(u,x) \leq 1/2 \leq d(x,z)$, we have
    \[d_T(x,y) \leq d_T(x,u) + d_T(u,y) \leq d_T(x,u) + |S| \leq d(x,z) + |S| = d(x,y), \]
    a contradiction.
\end{proof}

\begin{proof}[Proof of \cref{th:uniformNegTypeBound}]
Let $I_1$ be the closed interval of length $1/2$ such that $u \in I_1 \subset P_1$.
For $x \in I_1$, define $\phi_2(x)$ to be the point opposite to $x$ in the cycle $P_1 \cup P_2$, and similarly define $\phi_3(x)$ to be the point opposite to $x$ in the cycle $P_1 \cup P_3$.
Since $|P_1| \leq |P_2|$, the image $I_2$ of $\phi_2$ is contained in $P_2$, and likewise the image $I_3$ of $\phi_3$ is contained in $P_3$.

Since $|I_2| = |I_3| = 1/2$, there is at most one vertex in $I_2$ and at most one vertex in $I_3$.
Hence, by an averaging argument, there are closed intervals $J_1 \subset I_1, J_2 \subset I_2$, and $J_3 \subset I_3$ such that $|J_1|=1/6$, the image of $J_1$ under $\phi_j$ is $J_j$ for $j \in \{2,3\}$, and neither $J_2$ nor $J_3$ has any interior vertices.
Note that $I_2 \cap I_3 = \emptyset$ unless $|P_1| = |P_2| = |P_3|$, and in this case the only vertices in $I_2$ and $I_3$ occur at their shared endpoint. Hence, we may assume that $J_2 \cap J_3 = \emptyset$.

Let $x_1,x_2,x_3$ be evenly spaced points in $J_1$, with $x_1$ and $x_3$ the endpoints.
For $i \in [3]$, let $y_i = \phi_2(x_i)$ and $z_i = \phi_3(x_i)$.

Let $i \in [2]$.
Using the fact, by \cref{th:distancePreservedFromBranchPoints}, that $d(x,y)=d_T(x,y)$ for $x \in I_1$ and $y \in T$, and the definition of $\phi_2$, we have
\[d(x_i,y_i) = d(x_i,y_{i+1}) + d(y_i,y_{i+1}) = d(x_i,y_{i+1}) + 1/12.\]
The same reasoning applies to $d(x_{i+1},y_{i+1}), d(x_i,z_i)$, and $d(x_{i+1},z_{i+1})$, and hence
\begin{multline}\label{eq:xyxz}
d(x_i,y_i) + d(x_{i+1},y_{i+1}) + d(x_i,z_i) + d(x_{i+1},z_{i+1}) = \\ d(x_i,y_{i+1}) + d(x_{i+1},y_i) + d(x_i, z_{i+1}) + d(x_{i+1},z_i) + 4/12. \end{multline}

It remains to consider distances between points in $J_2$ and $J_3$.
In particular, we will find an index $i \in [2]$ such that 
\begin{equation}\label{eq:yz} d(y_i,z_i) + d(y_{i+1},z_{i+1}) \geq d(y_i,z_{i+1}) + d(y_{i+1},z_i). \end{equation}

Since there are no vertices in the interior of $J_2$ or $J_3$, we have
\[d(y_2,z_2) = 1/6 + \min(d(y_1,z_1),d(y_1,z_3),d(y_3,z_1),d(y_3,z_3)); \]
see \cref{fig:y2z2paths}.

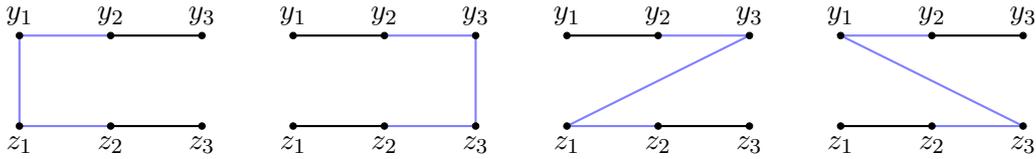
\begin{figure}[h!]
\begin{center}
\begin{tikzpicture}[scale=0.6]

\draw[black, thick] (2,0)--(4,0);
\draw[black, thick] (2,2)--(4,2);
\draw[blue!50, thick] (2,2)--(0,2)--(0,0)--(2,0);
\filldraw[black] (2,0) circle (2pt) node[anchor=north]{$z_2$};
\filldraw[black] (0,0) circle (2pt) node[anchor=north]{$z_1$};
\filldraw[black] (4,0) circle (2pt) node[anchor=north]{$z_3$};
\filldraw[black] (2,2) circle (2pt) node[anchor=south]{$y_2$};
\filldraw[black] (0,2) circle (2pt) node[anchor=south]{$y_1$};
\filldraw[black] (4,2) circle (2pt) node[anchor=south]{$y_3$};

\draw[black, thick] (6,0)--(8,0);
\draw[black, thick] (6,2)--(8,2);
\draw[blue!50, thick] (8,2)--(10,2)--(10,0)--(8,0);
\filldraw[black] (8,0) circle (2pt) node[anchor=north]{$z_2$};
\filldraw[black] (6,0) circle (2pt) node[anchor=north]{$z_1$};
\filldraw[black] (10,0) circle (2pt) node[anchor=north]{$z_3$};
\filldraw[black] (8,2) circle (2pt) node[anchor=south]{$y_2$};
\filldraw[black] (6,2) circle (2pt) node[anchor=south]{$y_1$};
\filldraw[black] (10,2) circle (2pt) node[anchor=south]{$y_3$};

\draw[black, thick] (14,0)--(16,0);
\draw[black, thick] (12,2)--(14,2);
\draw[blue!50, thick] (14,2)--(16,2)--(12,0)--(14,0);
\filldraw[black] (14,0) circle (2pt) node[anchor=north]{$z_2$};
\filldraw[black] (12,0) circle (2pt) node[anchor=north]{$z_1$};
\filldraw[black] (16,0) circle (2pt) node[anchor=north]{$z_3$};
\filldraw[black] (14,2) circle (2pt) node[anchor=south]{$y_2$};
\filldraw[black] (12,2) circle (2pt) node[anchor=south]{$y_1$};
\filldraw[black] (16,2) circle (2pt) node[anchor=south]{$y_3$};

\draw[black, thick] (20,2)--(22,2);
\draw[black, thick] (18,0)--(20,0);
\draw[blue!50, thick] (20,0)--(22,0)--(18,2)--(20,2);
\filldraw[black] (20,0) circle (2pt) node[anchor=north]{$z_2$};
\filldraw[black] (18,0) circle (2pt) node[anchor=north]{$z_1$};
\filldraw[black] (22,0) circle (2pt) node[anchor=north]{$z_3$};
\filldraw[black] (20,2) circle (2pt) node[anchor=south]{$y_2$};
\filldraw[black] (18,2) circle (2pt) node[anchor=south]{$y_1$};
\filldraw[black] (22,2) circle (2pt) node[anchor=south]{$y_3$};

\end{tikzpicture}
\end{center}\caption{The blue paths show the possible shortest paths between $y_2$ and $z_2$.}
\label{fig:y2z2paths}
\end{figure}

If $d(y_2,z_2) = 1/6+d(y_1,z_1)$, then we have
\[d(y_1,z_1)+d(y_2,z_2) = 2d(y_1,z_1) + d(y_2,y_1)+d(z_1,z_2) \geq d(y_1,z_2) + d(y_2,z_1), \]
which establishes \cref{eq:yz} for $i=1$.
The case that $d(y_2,z_2) = 1/6+d(y_3,z_3)$ is handled similarly, and leads to the choice $i=2$.

Suppose that $d(y_2,z_2) = 1/6+d(y_1,z_3)$.
Then,
\[d(y_1,z_1) + d(y_2,z_2) = d(y_2,y_1) + d(y_1,z_1)  + d(y_1,z_3) + d(z_3,z_2) \geq d(y_2,z_1) + d(y_1,z_2),\]
which establishes \cref{eq:yz} for $i=1$.
The case $d(y_2,z_2) = 1/6+d(y_3,z_1)$ is handled similarly, and in this case either choice for $i$ works.

From \cref{eq:xyxz} and \cref{eq:yz}, we have, for the appropriate choice of $i$,
\begin{multline*}
    d(x_i,y_i) + d(x_{i+1},y_{i+1}) + d(x_i,z_i) + d(x_{i+1},z_{i+1}) + d(y_i,z_i) + d(y_{i+1},z_{i+1}) \geq \\
    d(x_i,y_{i+1}) + d(x_{i+1},y_i) + d(x_i, z_{i+1}) + d(x_{i+1},z_i) + 4/12 + d(y_i,z_{i+1}) + d(y_{i+1},z_i) = \\
    d(x_i,y_{i+1}) + d(x_{i+1},y_i) + d(x_i + z_{i+1}) + d(x_{i+1},z_i) +  d(y_i,z_{i+1}) + d(y_{i+1},z_i) + \\ d(x_i,x_{i+1})+d(y_i,y_{i+1}) + d(z_i,z_{i+1}) + 1/12,
\end{multline*}
which completes the proof of the theorem for $B=\{x_i,y_i,z_i\}$ and $R=\{x_{i+1},y_{i+1},z_{i+1}\}$.
\end{proof}

\bibliographystyle{plain}
\bibliography{negativeType}

\end{document}